\documentclass[11pt,reqno]{amsart}
\bibliographystyle{spbasic}
\usepackage{amscd,amssymb,amsmath,amsthm}
\usepackage[arrow,matrix]{xy}
\usepackage{graphicx}
\usepackage{epstopdf}
\usepackage{color}
\topmargin=0.1in \textwidth6.1in \textheight8.0in

\newtheorem{thm}{Theorem}
\newtheorem{defn}{Definition}
\newtheorem{lemma}{Lemma}
\newtheorem{pro}{Proposition}
\newtheorem{rk}{Remark}

\numberwithin{equation}{section} \setcounter{tocdepth}{1}

%\doublespacing

\begin{document}

\vspace{0.5in}
\renewcommand{\bf}{\bfseries}
\renewcommand{\sc}{\scshape}
%insert defs/styles
\vspace{0.5in}

\title[On ocean ecosystem discrete time dynamics]{On ocean ecosystem discrete time dynamics generated by $\ell$-Volterra operators}

\author{U. A. Rozikov, S.K. Shoyimardonov}

 \address{U.\ A.\ Rozikov\\ Institute of mathematics,
81, Mirzo Ulug'bek str., 100125, Tashkent, Uzbekistan.}
\email {rozikovu@yandex.ru}

\address{Department of Higher Mathematics; Tashkent University of Information Technologies named after Muhammad al-Khwarizmi;
Tashkent, Uzbekistan.}
\email{shoyimardonov@inbox.ru}

% Current address (if needed):
%\curraddr{}

%    Information for second author (if needed):
%\author{Author Two}
%\address{}
%\email{}
%\thanks{Support information for the second author.}

%    General info
%%%%%%%%%%%%%%%%%%%%%%%%%%%%%%%%%%%%%%%%%%%%%%%%%%%

%                                                                                                                           %
%         Please use the current 2010 Mathematics Subject Classification:             %
%         http://www.ams.org/mathscinet/msc/                                                        %
%         http://www.zentralblatt-math.org/msc/en/                                                 %
%%%%%%%%%%%%%%%%%%%%%%%%%%%%%%%%%%%%%%%%%%%%%%%%%%%

\keywords{Quadratic stochastic operator, Volterra operator, $\ell$-Volterra operator, ecosystem modeling}

\begin{abstract} We consider a discrete-time dynamical system
generated by a nonlinear operator (with four real parameters $a,b,c,d$) of ocean ecosystem.
We find conditions on the parameters under which the operator is reduced to a $\ell$-Volterra
quadratic stochastic operator mapping two-dimensional simplex to itself. We
show that if $cd(c+d)=0$ then (under some conditions on $a,b$) this $\ell$-Volterra operator may have
up to three or a countable set of fixed points;
if $cd(c+d)\ne 0$ then the operator has up to three  fixed points. Depending on the parameters the
fixed points may be attracting, repelling or saddle points.  The limit behaviors of trajectories of
the dynamical system are studied. It is shown that independently on values of parameters and on
initial (starting) point all trajectories converge. Thus the operator (dynamical system) is regular.
We give some biological interpretations of our results.
\end{abstract}

\maketitle

\section{Introduction}

 An ecosystem is a community made up of living organisms and nonliving
 components such as air, water and mineral soil\footnote{https://en.wikipedia.org/wiki/Ecosystem}.

In ecosystems the energy flows are somewhat difficult to measure and to model.
The most successful ecosystem models have concentrated on the balance of
essential elements such as \emph{carbon}, \emph{nitrogen} and \emph{phosphorus}
rather than explicitly on energy \cite{Brit}, \cite{M}.

Following \cite{Brit}, \cite{M} we consider an example of plankton in ocean limited by the essential element nitrogen,
and assume that the system is closed to nitrogen. Plankton are of two kinds,
\emph{phytoplankton}, or plant plankton, which photosynthesis and require essential elements,
and \emph{zooplankton}, or animal plankton, which feed on phytoplankton.
It is assumed that all the action takes place in a well-mixed surface layer of the ocean.

 Let $N$ be the concentration of nitrogen available for uptake,
 measured as mass per unit surface area of the ocean, $P$ the concentration of phytoplankton,
 and $Z$ the concentration of zooplankton, both measured in the same currency,
 i.e. as mass of nitrogen incorporated in the plankton per unit surface area of the ocean.

Nitrogen (as dissolved gas or compounds) is taken up from the ocean
and incorporated into phytoplankton. It is incorporated into zooplankton through
consumption of phytoplankton. It is recycled from the plankton of the ocean
through death and excretion.

In \cite{A} the authors developed moment closure approximations
to represent micro-scale spatial variability in the concentrations of
nutrients, phytoplankton and zooplankton in an NPZ model.
For the NPZ closure model  the following are showed: the stability domains increases
with micro-scale variability,  increases the biomass of zooplankton, and
 the coefficient of variation of phytoplankton increases with micro-scale variability.

At time moment $t\geq 0$ the state of the ecosystem is given by the vector $(N(t), P(t), Z(t))$.

In \cite{Brit} the following model of ocean ecosystem processes is given:
\begin{equation}\label{1}
\begin{cases}
\frac{dN}{dt}=aP+bZ-cNP\\
\frac{dP}{dt}=cNP-dPZ-aP \\
\frac{dZ}{dt}=dPZ-bZ .
\end{cases}
\end{equation}
where $a,b,c,d\in R$,  it follows from the system that
$$\frac{d}{dt}(N+P+Z)=0, \   \  \  \ \mbox{so that} \ \ \ N+P+Z={\rm const},$$
i.e., this is law of
conservation of mass for nitrogen and this constant represents the
total concentration of nitrogen, both available for uptake (free)
and incorporated in the plankton (bound).

In this paper we study the discrete time version of (\ref{1}).
This system is a dynamical system generated by a 2-Volterra
 quadratic stochastic operator.

Let us first give necessary definitions, then explain what is the main problem;
secondly we give the history of its solutions and then formulate the part of the
problem which we want to solve in this paper.

\emph{The quadratic stochastic operator} (QSO) \cite{UR}  is a mapping of the simplex.
\begin{equation}\label{2}
S^{m-1}=\{x=(x_{1},...,x_{m})\in R^{m}: x_{i}\geq0, \sum\limits_{i=1}^{n} x_{i}=1\}
\end{equation}
into itself, of the form
\begin{equation}\label{3}
V:x'_{k}=\sum\limits_{i,j=1}^{m}P_{ij,k}x_{i}x_{j}, \ \ \ \ \ \  k=1,...,m,
\end{equation}
where the coefficients $P_{ij,k}$ satisfy the following conditions
\begin{equation}\label{4}
P_{ij,k}\geq 0,\ \  P_{ij,k}=P_{ji,k}, \ \   \sum\limits_{k=1}^{m}P_{ij,k}=1,  \ \ \ \  (i,j,k=1,...,m)
\end{equation}
Thus, each quadratic stochastic operator $V$ can be uniquely defined by a cubic matrix $\mathbb P=(P_{ij,k})_{i,j,k=1}^{n}$ with conditions (\ref{4}).

For a given $\lambda^{(0)}\in S^{m-1}$ the \emph{trajectory} (orbit)
$$ \{\lambda^{(n)}\}, \ \ \ \  n=0,1,2,... \text{of} \ \ \lambda^{(0)}$$
under the action of QSO (\ref{3}) is defined by

\begin{center}
$\lambda^{(n+1)}=V(\lambda^{(n)}),$   \ \ \ \ where \ \ $n=0,1,2,...$
\end{center}

One of the main problem in mathematical biology consists in the study of the
asymptotical behavior of the trajectories. The difficulty of the problem depends on given matrix $\mathbb P$.
One of simple cases is Volterra QSO, i.e., the matrix $\mathbb P$ satisfies
\begin{equation}\label{v}
P_{ij,k}=0 \ \ {\rm if} \, k\notin\{i,j\} \ \ \mbox{for any} \ \ i,j\in E.
\end{equation}
In \cite{RGan} the theory for Volterra QSO was developed by using the theories of the Lyapunov function
and of tournaments. But non-Volterra QSO's (i.e., not satisfying condition (\ref{v})) were not exhaustively
studied, because there is no general theory that can be applied to the study of non-Volterra operators.
There are a few papers devoted to such operators see, for example, \cite{GMR}, \cite{UR}-\cite{UU}.
The operator which we are going to study in this paper is another example of non-Volterra operators.

\emph{$\ell$-Volterra QSO}. Fix $\ell\in\{1,...,m\}$ and assume that elements $P_{ij,k}$ of the matrix $\mathbb P$ satisfy

\begin{equation}\label{7}\begin{array}{ll}
 P_{ij,k}=0 \ \ {\rm if} \ \ k\notin\{i,j\} \ \ \mbox{for any} \ \ k\in\{1,...,\ell\}, i,j\in E;\\[3mm]
P_{ij,k}>0\ \ \mbox{for at least one pair} \ \ (i,j), \, i\neq k, \ \ j\neq k \, \mbox{for any} \ \  k\in \{\ell+1,...,m\}.
\end{array}
\end{equation}
\begin{defn}\label{d1} \cite{UZ} For any fixed $\ell\in\{1,...,m\}$, the QSO defined by (\ref{3}), (\ref{4}) and (\ref{7}) is called $\ell$-Volterra QSO.
\end{defn}
\begin{defn}
A QSO $V$ is called regular if for any initial point $\lambda^{(0)} \in S^{m-1}$, the limit
\[\lim_{n\to \infty}V^n(\lambda^{(0)}) \]
exists.
\end{defn}

We consider a model of discrete time process of ocean
ecosystem (\ref{1}), which has the following form
\begin{equation}\label{b3}
V:
\begin{cases}
x^{(1)}=x(1-b+dy)\\
y^{(1)}=y(1-a-dx+cz)\\
z^{(1)}=z(1-cy)+ay+bx.
\end{cases}
\end{equation}
where $x=Z, y=P, z=N$, $a,b,c,d\in R$.

The set of limit points of trajectory is important in the theory of dynamical systems.
The \textbf{main problem} of our investigation is to study the
set of  limit points of trajectories of the operator (\ref{b3}).

A fixed point $p$ for a mapping $F: R^{m}\rightarrow R^{m}$ is a solution to the equation $F(p)=p$.
By the continuity of the operator $V$ (\ref{b3}) its limit points are fixed points for the operator $V$.

The paper is organized as follows. In the next Section 2
 we reduce our operator to a 2-Volterra operator mapping $S^2$ to itself
 and compare it with known 2-Volterra operators.
 Section 3 is devoted to dynamical systems of the operator when $cd(c+d)=0$. In this case
 it is shown that the set of fixed points may be an uncountable set. For some values of parameters we
 find limit points of the trajectories depending on the initial points.
 The case $cd(c+d)\ne 0$ is considered
 in Section 4, we show that there are up to three fixed points,
 limit points of all trajectories of the operator are given. In general, we show that all trajectories converge,
 this means that the operator is regular.
 In the last section we give some biological interpretations of our results.

\section{Reduction to 2-Volterra QSO} Note that the operator (\ref{b3}) has a form of 2-Volterra QSO, but the parameters
of this operator are not related to $P_{ij,k}$. Here to make some relations with $P_{ij,k}$ we find conditions on parameters
of (\ref{b3}) rewriting it in the form (\ref{3}).
It is easy to see that $x^{(1)}+y^{(1)}+z^{(1)}=x+y+z$, to embed a vector $(x,y,z)$ in the set $S^2$ we assume $x+y+z=1$ and $x,y,z\geq 0$.

Using $x+y+z=1$,  the system (\ref{b3}) can be written as the following:

\begin{equation}\label{9}
\begin{cases}
x^{(1)}=x[(1-b)x+(1-b+d)y+(1-b)z]\\
y^{(1)}=y[(1-a-d)x+(1-a)y+(1-a+c)z]\\
z^{(1)}=z[x+(1-c)y+z]+ay+bx
\end{cases}
\end{equation}

Third equation of the system (\ref{9}) can be written as the following:
$$z^{(1)}=z[x+(1-c)y+z]+ay(x+y+z)+bx(x+y+z)$$
\begin{equation}\label{10}
 =z[(1+b)x+(1+a-c)y+z]+bx^{2}+(a+b)xy+ay^{2}.
\end{equation}
We consider (\ref{3}) for the case $m=3$:

\begin{equation}\label{11}
\left\{
\begin{array}{r}
x^{(1)}=P_{11,1}x^{2}+2P_{12.1}xy+2P_{13,1}xz+2P_{23,1}yz+P_{22,1}y^{2}+P_{33,1}z^{2}\\
y^{(1)}=P_{11,2}x^{2}+2P_{12.2}xy+2P_{13,2}xz+2P_{23,2}yz+P_{22,2}y^{2}+P_{33,2}z^{2}\\
z^{(1)}=P_{11,3}x^{2}+2P_{12.3}xy+2P_{13,3}xz+2P_{23,3}yz+P_{22,3}y^{2}+P_{33,3}z^{2}
\end{array}\right.
\end{equation}
From equation \, (\ref{10}) \, and by the systems \,(\ref{9})\, and \,(\ref{11})\, we have the following relations:
\begin{equation}\label{par}
\begin{array}{lll}
P_{11,1}=1-b, \ \ 2P_{12,1}=1-b+d, \ \ 2P_{13,1}=1-b, \ \ P_{23,1}=P_{22,1}=P_{33,1}=0 \\[2mm]
P_{22,2}=1-a, \ \ 2P_{12,2}=1-a-d, \ \ 2P_{23,2}=1-a+c, \ \ P_{11,2}=P_{13,2}=P_{33,2}=0 \\[2mm]
P_{11,3}=b, \ \  2P_{12,3}=a+b, \ \ 2P_{13,3}=1+b,  \ \ 2P_{23,3}=1+a-c,  \ \ P_{22,3}=a, \ \ P_{33,3}=1.
\end{array}
\end{equation}
\begin{pro} The operator (\ref{b3}) maps $S^2$ to itself if and only if
\begin{equation}\label{cond}
0\leq a\leq 1, \ \ 0\leq b\leq 1, \ \ -(1-a)\leq c\leq 1+a, \ \ -(1-b)\leq d\leq 1-a.
\end{equation}
Moreover, under condition (\ref{cond}) the operator is a 2-Volterra QSO.
\end{pro}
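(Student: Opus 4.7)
The plan rests on the explicit symmetric quadratic representation of $V$ given by (\ref{par}), which expresses $V$ uniquely in the form (\ref{11}). With these coefficients in hand, both the mapping property and the 2-Volterra structure are a matter of reading off nonnegativity and vanishing patterns.

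First I would note that the stochastic sum conditions $\sum_{k=1}^{3}P_{ij,k}=1$ of (\ref{4}) follow automatically from the conservation identity $x^{(1)}+y^{(1)}+z^{(1)}=x+y+z$ already used to pass from (\ref{b3}) to (\ref{9}); a column-by-column check of (\ref{par}) (e.g. $(1-b)+0+b=1$ and $(1-b+d)+(1-a-d)+(a+b)=2$, and similarly for the remaining pairs) verifies this directly. Consequently, the operator is a QSO (and in particular preserves $S^{2}$) precisely when each entry of (\ref{par}) is nonnegative, which is exactly what (\ref{cond}) encodes: $0\le a\le 1$ matches $P_{22,2},P_{22,3}\ge 0$; $0\le b\le 1$ matches $P_{11,1},P_{11,3}\ge 0$; $-(1-b)\le d\le 1-a$ matches $2P_{12,1},2P_{12,2}\ge 0$; $-(1-a)\le c\le 1+a$ matches $2P_{23,2},2P_{23,3}\ge 0$; and the remaining entries $2P_{12,3}=a+b$, $2P_{13,1}=1-b$, $2P_{13,3}=1+b$, $P_{33,3}=1$ are automatically nonnegative. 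This gives the sufficiency direction at once.

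For necessity I would test $V$ on the boundary of $S^{2}$. The images of the vertices $(1,0,0)\mapsto(1-b,0,b)$ and $(0,1,0)\mapsto(0,1-a,a)$ pin down $0\le a,b\le 1$ immediately. To extract the two-sided bounds on $c$ and $d$ I move slightly along an edge: at $(\varepsilon,1-\varepsilon,0)$ the coordinate $x^{(1)}=\varepsilon[(1-b)+d(1-\varepsilon)]\ge 0$ forces $d\ge -(1-b)$ as $\varepsilon\to 0^{+}$; at $(1-\varepsilon,\varepsilon,0)$ the coordinate $y^{(1)}=\varepsilon[(1-a)-d(1-\varepsilon)]\ge 0$ forces $d\le 1-a$; and the mirror analysis on the edge $x=0$, applied to $y^{(1)}$ near $(0,\varepsilon,1-\varepsilon)$ and to $z^{(1)}$ near $(0,1-\varepsilon,\varepsilon)$, yields $-(1-a)\le c\le 1+a$. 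This is the step that needs care, and the main obstacle is choosing boundary arcs along which each of the four sign inequalities separates cleanly rather than getting entangled with the others.

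Finally, the \emph{moreover} assertion is a direct check of Definition \ref{d1} with $\ell=2$ against (\ref{par}): rows $k=1$ and $k=2$ show that every nonzero $P_{ij,k}$ has $k\in\{i,j\}$, while for $k=3$ at least one of $P_{11,3}=b$, $2P_{12,3}=a+b$, $P_{22,3}=a$ has $i,j\ne 3$ and is strictly positive (modulo the degenerate case $a=b=0$).
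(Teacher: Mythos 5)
Your sufficiency direction and the 2-Volterra check coincide with the paper's (one-line) proof: since $x^{(1)}+y^{(1)}+z^{(1)}=x+y+z$, the stochasticity sums in (\ref{4}) hold identically, so (\ref{cond}) is exactly the list of nonnegativity requirements $P_{ij,k}\ge 0$ read off from (\ref{par}), and the vanishing pattern in the rows $k=1,2$ together with $P_{11,3}=b$, $P_{22,3}=a$ gives the $\ell=2$ Volterra property. Your observation that the case $k=3$ requires $a+b>0$ is a genuine (minor) point the paper omits.

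The gap is in your necessity argument for the upper bound $c\le 1+a$. Along your arc $(0,1-\varepsilon,\varepsilon)$ one has
\[
z^{(1)}=\varepsilon\bigl(1-c(1-\varepsilon)\bigr)+a(1-\varepsilon)^{2}\ \longrightarrow\ a\qquad(\varepsilon\to 0^{+}),
\]
so for $a>0$ the third coordinate stays positive for small $\varepsilon$ no matter how large $c$ is, and no constraint is extracted. This cannot be repaired by a better choice of arc, because $c\le 1+a$ is simply not necessary for $V(S^{2})\subseteq S^{2}$: on the face $x=0$ the third coordinate is the binary form $z^{2}+(1+a-c)yz+ay^{2}$, which is nonnegative on the positive quadrant if and only if $1+a-c\ge -2\sqrt{a}$, i.e.\ $c\le (1+\sqrt{a})^{2}$, strictly weaker than $c\le 1+a$ when $a>0$. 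Concretely, $a=1$, $b=d=0$, $c=\tfrac52$ violates (\ref{cond}), yet $x^{(1)}=x$, $y^{(1)}=\tfrac52\,yz\ge 0$ and $z^{(1)}=z+y\left(1-\tfrac52 z\right)\ge 0$ on $S^{2}$ (trivially for $z\le\tfrac25$, and for $z>\tfrac25$ one has $z^{(1)}\ge z+(1-z)\left(1-\tfrac52 z\right)=\tfrac52 z^{2}-\tfrac52 z+1>0$), so this operator does map the simplex into itself. The arcs you use for the other bounds ($0\le a,b\le 1$, $-(1-b)\le d\le 1-a$, $c\ge -(1-a)$) do work, because there the relevant coordinate is $\varepsilon$ times a factor converging to the coefficient in question; the $c\le 1+a$ bound is the one place where the constant term $ay^{2}$ shields a negative coefficient. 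What is actually equivalent to (\ref{cond}) --- and what the paper's proof establishes --- is nonnegativity of all coefficients in (\ref{par}), i.e.\ that (\ref{b3}) is a QSO in the form (\ref{11}); that implies, but is not implied by, the mapping property. Your ``only if'' should be stated at the level of the coefficients, or the upper bound on $c$ must be corrected.
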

\begin{proof} The proof consists solving of simple inequalities which are obtained from conditions (\ref{4})
for $P_{ij,k}$ given by equalities (\ref{par}).
\end{proof}
\begin{rk} If $a=b, \, c=0, d=0$ then the operator (\ref{b3}) coincides with  operator (5.1)
in \cite{UR}.
By the condition (5.3) in the paper \cite{UR} and by the (\ref{par}) we have the following:
$$P_{11,1}=P_{22,2}  \Rightarrow a=b,$$
$$P_{12,1}=P_{12,2}   \Rightarrow d=0,$$
$$P_{13,1}=P_{23,2}    \Rightarrow c=0.$$
\end{rk}
\begin{rk} In the sequel of the paper we consider operator (\ref{b3}) with four parameters $a,b,c,d$ which satisfy
condition (\ref{cond}). This operator maps $S^2$ to itself and we are interested to study the behavior of the trajectory
of any initial point $(x,y,z)\in S^2$ under iterations of the operator (\ref{b3}).
\end{rk}

\section{Case $cd(c+d)=0$}

\subsection{Case $c=d=0$} In this case the operator (\ref{b3}) becomes a linear operator:
\begin{equation}\label{01}
V:
\begin{cases}
x^{(1)}=x(1-b)\\
y^{(1)}=y(1-a)\\
z^{(1)}=z+ay+bx.
\end{cases}
\end{equation}
\subsubsection{Case:} $a=b=0$. In this case the operator is $id$ map, i.e., $V(x,y,z)=(x, y, z)$.
\subsubsection{Case:} $a\ne 0, b=0$.
We have
\begin{equation}\label{u1}
\lim_{n\to \infty} V^n(x, y, z)=\lim_{n\to \infty} (x^{(n)}, y^{(n)}, z^{(n)})
=\lim_{n\to \infty} (x, (1-a)^ny, z^{(n)})=(x,0,1-x),
\end{equation}
thus in this case there are infinitely many fixed points $(x, 0, 1-x)$ and the trajectory started at any
initial point $(x, y, z)$ has the limit point $(x, 0, 1-x)$.

The case $a=0$, $b\ne 0$ is similar, and the limit point is $(0, y, 1-y)$.\\
\subsubsection{Case:} $ab\ne 0$. In this case the operator has a unique fixed point
$(0,0,1)$. Moreover, for any
initial point $(x,y,z)\in S^2$ we have
\begin{equation}\label{u2}
\lim_{n\to \infty} V^n(x,y,z)=\lim_{n\to \infty} (x^{(n)}, y^{(n)}, z^{(n)})=(0,0,1),
\end{equation}
with $x^{(n)}=(1-b)^n x, y^{(n)}=(1-a)^n y$.

\subsection{Case:} $c=0, d\ne 0$.  In this case the third coordinate of the operator has a linear form:
\begin{equation}
\begin{cases}
x^{(1)}=x(1-b+dy)\\
y^{(1)}=y(1-a-dx)\\
z^{(1)}=z+ay+bx.
\end{cases}
\end{equation}
\subsubsection{Case:} $a=b=0$. To find fixed points we solve $V(x,y,z)=(x,y,z)$, i.e.
$$x=x(1+dy), \ \ y=y(1-dx),\ \ z=z.$$
This system has infinitely many solutions, i.e., the following are the set of fixed points:
$$F_{1}=\{(x,y,z)\in S^2: y=0\},\ \ \ F_{2}=\{(x,y,z)\in S^2: x=0\}.$$
For the trajectory $(x^{(n)}, y^{(n)}, z^{(n)})$ we have
\begin{equation}\label{ns}
\begin{cases}
x^{(n+1)}=x^{(n)}(1+dy^{(n)})\\
y^{(n+1)}=y^{(n)}(1-dx^{(n)})\\
z^{(n+1)}=z^{(n)}=z,
\end{cases}
\end{equation}
i.e. the third coordinate does not depend on $n$, i.e., $z^{(n)}=z$. Now using $x^{(n)}+y^{(n)}+z^{(n)}=1$ from the first equality of
(\ref{ns}) we get $x^{(n+1)}=x^{(n)}(1+d(1-x^{(n)}-z))$, thus the behavior of $x^{(n)}$ is given by
the one-dimensional dynamical system generated by the function
$$f(x)=x(1+d(1-x-z))=-dx^2+[1+d(1-z)]x, \ \ x\in [0,1]$$ here $d$ and $z$ are considered as parameters.

Note that $f(x)$ has two fixed points $x=0$ and $x=1-z$.

Assume $d>0$ then $f'(0)=1+d(1-z)>1, z\ne 1$
(note that $z=1$ gives the fixed point $(0,0,1)$),
and $f'(1-z)=1-d(1-z)<1$. Hence $0$ is repelling and $1-z$ is attracting fixed point for $f$.
To find two-periodic points of $f(x)$ one has to solve $f(f(x))=x$. Note that the fixed points of $f$ also
solutions to $f(f(x))=x$. To find the two-periodic points different from the fixed points one has to solve
$${f(f(x))-x\over f(x)-x}=0.$$ This equation is a quadratic equation whose discriminant is $D=d^2(z-1)^2-4<0$, for any
$d\in [-1,1]$ and $z\in [0,1]$. Therefore, the function $f$ has no any two periodic point (different from fixed points)
then by Sarkovskii's theorem (see page 62 of \cite{De}) $f$ has no any periodic
(except fixed) point. For initial point $(x,y,z)$ we have $x=1-y-z\leq 1-z$, moreover,
for $d>0$ from (\ref{ns}) one can see that $x^{(n)}$ is increasing,
therefore it has a limit. Thus $1-z$ is globally attracting, i.e., for any initial point $x\in (0,1]$ we have
$\lim_{n\to\infty}x^{(n)}=1-z.$

Similarly in case $d<0$ we have $0$ is attracting and $1-z$ is repeller fixed point for $f$. Moreover,
for any initial point $x\in [0,1)$ we have
$$\lim_{n\to\infty}x^{(n)}=0.$$

Thus, if $d>0$ (resp. $d<0$) then for any initial point $(x,y,z)\in S^2$ we have
 the following
 \begin{equation}\label{u3}
\lim_{n\to \infty} V^n(x,y,z)=\lim_{n\to \infty} (x^{(n)}, y^{(n)}, z^{(n)})=(1-z,0,z) \ \ \ ({\rm resp.} \ \ (0,1-z,z)),
\end{equation}
because in this case there is no fixed points in int$S^2=\{(x,y,z)\in S^2: xyz>0\}$.

\subsubsection{Case:} $a=0, b\ne0$. Then the operator (\ref{b3}) has the following form
\begin{equation}\label{rk21}
\begin{cases}
x^{(1)}=x(1-b+dy)\\
y^{(1)}=y(1-dx)\\
z^{(1)}=z+bx.
\end{cases}
\end{equation}
It is easy to see that the operator (\ref{rk21}) has infinitely  many fixed points given by the following set
$$F_{3}=\{(x,y,z)\in S^2: x=0\}.$$ Moreover, it does not have a fixed point outside of $F_3$.

We have $z^{(1)}=z+bx\geq z$, consequently $z^{(n+1)}\geq z^{(n)}$,
i.e. $z^{(n)}$ increasing. Since $z^{(n)}\leq 1$ it has a limit.

In addition, if $d>0$ (resp. $d<0$) then $y^{(1)}=y(1-dx)\leq y$
(resp. $x^{(1)}=x(1-b+dy)\leq x$), consequently, $y^{(n)}$ (resp. $x^{(n)}$)
is decreasing sequence and it has a limit point. Hence,
for any initial point $(x,y,z)\in S^2$ there exists the limit
\begin{equation}\label{u4}
\lim_{n\to \infty} V^n(x,y,z)=\lim_{n\to \infty} (x^{(n)}, y^{(n)}, z^{(n)})=(0,\bar{y},1-\bar{y}),
\end{equation}
 because all limit points are fixed points belong to the set $F_{3}$, where $\lim_{n\to \infty} y^{(n)}=\bar{y}.$

\subsubsection{Case:}  $a\ne0, b=0$. This case is similar to the previous case the operator  (\ref{b3}) has the form
\begin{equation}\label{rk22}
\begin{cases}
x^{(1)}=x(1+dy)\\
y^{(1)}=y(1-a-dx)\\
z^{(1)}=z+ay.
\end{cases}
\end{equation}
The operator (\ref{rk22}) also has the following infinite set of fixed points
$$F_{4}=\{(x,y,z)\in S^2: y=0\}.$$
Here also $z^{(1)}=z+ay\geq z$ so $z^{(n)}$ increasing,  if $d>0$  then $x^{(1)}=x(1+dy)\geq x$, $y^{(1)}=y(1-a-dx)\leq y$ and $x^{(n)}$ increasing, $y^{(n)}$ decreasing. If $d<0$ then $x^{(1)}=x(1+dy)\leq x$ so $x^{(n)}$  decreasing sequence. Hence, for any initial point $(x,y,z)\in S^2$ we have
\begin{equation}\label{u5}
\lim_{n\to \infty} V^n(x,y,z)=\lim_{n\to \infty} (x^{(n)}, y^{(n)}, z^{(n)})=(\bar{x},0,1-\bar{x}),
  \end{equation}
because all fixed points belong to the set $F_{4}$, where $\lim_{n\to \infty} x^{(n)}=\bar{x}.$\\

\begin{rk} In limits (\ref{u4}) and (\ref{u5}) we only know existence of $\bar{y}$ and $\bar{x}$.
Of course, these values depend on the initial point $(x,y,z)$, i.e. $\bar{y}=\bar{y}(x,y,z)$ and $\bar{x}=\bar{x}(x,y,z)$. But
finding an explicit form of these values seems difficult problem.
This problem can be reduced to following non-linear recursive equations:

For finding of $z^{(n)}$ in (\ref{u4}):
\begin{equation}\label{z1}
z^{(n+2)}=-{d\over b}(z^{(n+1)})^2+{(2-b)d\over b}z^{(n+1)}z^{(n)}-{(1-b)d\over b}(z^{(n)})^2+(2-b+d)z^{(n+1)}-(1-b+d)z^{(n)},
\end{equation}
where $z^{(0)}=z$ and $z^{(1)}=z+bx$.

For finding of $z^{(n)}$ in (\ref{u5}):
\begin{equation}\label{z2}
z^{(n+2)}={d\over a}(z^{(n+1)})^2-{(2-a)d\over a}z^{(n+1)}z^{(n)}+{(1-a)d\over a}(z^{(n)})^2+(2-a-d)z^{(n+1)}-(1-a-d)z^{(n)},
\end{equation}
where $z^{(0)}=z$ and $z^{(1)}=z+ay$.
To the best of our knowledge there is no any general method to solve these recursive equations (with given initial values).
But formulas (\ref{z1}) and (\ref{z2}) are useful for using of a computer program.
\end{rk}
\subsection{Case:} $c\ne0, d=0$. In this case the system  (\ref{b3}) has the form
\begin{equation}\label{rk23}
\begin{cases}
x^{(1)}=x(1-b)\\
y^{(1)}=y(1-a+cz)\\
z^{(1)}=z(1-cy)+ay+bx
\end{cases}
\end{equation}
For the operator (\ref{rk23}) fixed points are
$$F_{5}=\left\{\begin{array}{ll}
(0,0,1) \ \ \mbox{if} \ \ b\ne 0, c<0\\[3mm]\\
\{(0,0,1), (0, 1-{a\over c}, {a\over c})\} \ \ \mbox{if} \ \ b\ne 0, c>0\\[3mm]\\
\{(x,0,1-x),(x, 1-x-{a\over c}, {a\over c})\} \ \ \mbox{if} \ \ b=0, c>0.\\[3mm]\\
\{(0,0,1), (x,0,1-x)\} \ \ \mbox{if} \ \ b=0, c<0.\\[3mm]
 \end{array}
 \right.$$

\subsubsection{Case:} $b=0$. Then $x^{(n)}=x$ for any $x\in [0,1]$, consequently
 $$y^{(n+1)}=y^{(n)}(1-a+c(1-x-y^{(n)})).$$ Hence $y^{(n)}$ is given by a dynamical system of
 $$g(y)=y(1-a+c(1-x-y))=-cy^2+(1-a+c(1-x))y,$$
 where $a, c, x$ are parameters.   This function has two fixed points: $y=0$ and $y=y_*=1-x-{a\over c}$.
If $c>0$ then  $0$ is repelling and $y_*$ is attracting fixed point for $g$.
Moreover,  for any initial point $y\in (0,1]$ we have
$$\lim_{n\to\infty}y^{(n)}=y_*.$$

Similarly in case $c<0$ we have $0$ is attracting and $y_*$ is repeller fixed point for $g$. Moreover,
for any initial point $y\in [0,1)$ we have
$$\lim_{n\to\infty}y^{(n)}=0.$$

Thus, if $c>0$ (resp. $c<0$) then for any initial point $(x,y,z)\in S^2$ we have
 the following
\begin{equation}\label{u6}
\lim_{n\to \infty} V^n(x,y,z)=\lim_{n\to \infty} (x^{(n)}, y^{(n)}, z^{(n)})=(x,y_*,1-x-y_*) \ \ \ ({\rm resp.} \ \ (x,0,1-x)).
\end{equation}

\subsubsection{Case:} $b\neq0$. Then $x^{(n)}=(1-b)^nx$ and when $c\leq a$, $(0,0,1)$ unique fixed point, $y^{(1)}=y(1-a+cz)\leq y$, i.e. the sequence $y^{(n)}$ is monotone decreasing so for any initial point  $(x,y,z)\in S^2$
$$\lim_{n\to \infty} (x^{(n)}, y^{(n)}, z^{(n)})=(0,0,1).$$
If $c>a$ then the set
 $$H=\{(x,y,z)\in S^2: z>\frac{a}{c}\}$$
is an invariant.  Indeed, if  $z>\frac{a}{c}$  then
$$1-x-y>\frac{a}{c} \Leftrightarrow cy<c-a-cx \Leftrightarrow 1-cy>1-(c-a)+cx>0 \Leftrightarrow  z^{(1)}=z(1-cy)+ay>\frac{a}{c}.$$
It is easy to see that if initial point $(x,y,z)\in H$ then the sequence $y^{(n)}$ has limit as an increasing and bounded sequence,
if $(x,y,z)\notin H$  then for $y^{(n)}$ we have two possibilities:
stays outside of $H$ and decreasingly goes to $1-{a\over c}$;
after finite steps goes inside of $H$ and increasingly converges to the same limit.

 Hence, for any initial point $(x,y,z)\in S^2$ we have
 \begin{equation}\label{u7}
 \lim_{n\to \infty} (x^{(n)}, y^{(n)}, z^{(n)})=
\begin{cases}
(0, 1-{a\over c}, {a\over c}),   \ \ \mbox{if} \ \  y>0\\[2mm]
(0,0,1),    \ \ \mbox{if}  \ \  y=0\\
\end{cases}
\end{equation}

\subsection{Case:} $c=-d\neq0$. Here the operator  (\ref{b3}) has the form
\begin{equation}\label{rk24}
\begin{cases}
x^{(1)}=x(1-b-cy)\\
y^{(1)}=y(1-a+cx+cz)\\
z^{(1)}=z(1-cy)+ay+bx
\end{cases}
\end{equation}

For the operator (\ref{rk24}) fixed points are
$$F_{6}=\left\{\begin{array}{ll}
(0,0,1) \ \ \mbox{if} \ \ c\leq a, b\ne 0 \\[3mm]\\
\{(x, 0, 1-x)\} \ \ \mbox{if} \ \ c\leq a, b= 0\\[3mm]\\
\{(0,0,1),(0, 1-{a\over c}, {a\over c})\} \ \ \mbox{if} \ \ c>a, b\ne 0 \\[3mm]\\
\{(x,0,1-x),(0, 1-{a\over c}, {a\over c}) \} \ \ \mbox{if} \ \ c>a, b=0 \\[3mm]
 \end{array}
 \right.$$
\subsubsection{Case:} $0<c\leq a$ . Then $x^{(1)}=x(1-b-cy)\leq x$, $y^{(1)}=y(1-a+cx+cz)=y(1-(a-c(x+y)))\leq y$, i.e. the sequences $x^{(n)}$ and $y^{(n)}$ are decreasing.\\
\subsubsection{Case:} $c\leq a, c<0$ . Then  $y^{(1)}=y(1-a+cx+cz)\leq y$, $z^{(1)}=z(1-cy)+ay+bx\geq z,$ i.e. the sequences $y^{(n)}$ and $z^{(n)}$ are monotone. Thus, if $b\neq0$ then for any initial point  $(x,y,z)\in S^2$ we have
\begin{equation}\label{u8}\lim_{n\to \infty} (x^{(n)}, y^{(n)}, z^{(n)})=(0,0,1)\end{equation}
 (since $(0,0,1)$ is unique fixed point).\\
If $b=0$ then for any initial point  $(x,y,z)\in S^2$
\begin{equation}\label{u9}\lim_{n\to \infty} (x^{(n)}, y^{(n)}, z^{(n)})=(\tilde{x},0,1-\tilde{x}),\end{equation}
where $\lim_{n\to \infty}{x^{(n)}}=\tilde{x}.$
\subsubsection{Case:} $c> a$. In the case  $c> a$ for any $(x,y,z)\in S^2$
we have (this is a particular case of Theorem \ref{ts} given in the next section)
\begin{equation}\label{u9}
 \lim_{n\to \infty}(x^{(n)}, y^{(n)}, z^{(n)})=
\begin{cases}
(0,0,1),   \ \ \ \mbox{if} \ \ y=0, b\neq0\\
(x,0,1-x), \ \ \mbox{if}  \ \ y=0, b=0\\
(0, 1-{a\over c}, {a\over c}), \ \ \mbox{if} \ \ y>0.
\end{cases}
\end{equation}
Thus independently on parameters $a,b,c,d$ and initial point $(x,y,z)$ the limit of trajectory exists.
Summarizing above mentioned results (i.e., (\ref{u1}), (\ref{u2}), \dots, (\ref{u9})) we get
\begin{thm}\label{t1} If $cd(c+d)=0$ then for any $(x,y,z)\in S^2$ we have
$$\lim_{n\to \infty} V^n(x, y, z)=\left\{\begin{array}{lllllllllllllllll}
(x,y,z), \ \ \ \mbox{if} \ \ a=b=c=d=0\\[2mm]
(x,0,1-x), \ \ \ \mbox{if} \ \ a\ne 0, b=c=d=0\\[2mm]
(0,y,1-y), \ \ \ \mbox{if} \ \ a=c=d=0, b\ne 0\\[2mm]
(0,0,1), \ \ \ \mbox{if} \ \ ab\ne 0, \, c=d=0\\[2mm]
(1-z,0,z), \ \ \ \mbox{if} \ \ a=b=c=0, d>0\\[2mm]
(0,1-z,z), \ \ \ \mbox{if} \ \ a=b=c=0, d<0\\[2mm]
(0,\bar y,1-\bar y), \ \ \ \mbox{if} \ \ a=c=0, bd\ne 0\\[2mm]
(\bar x, 0, 1-\bar x), \ \ \ \mbox{if} \ \ b=c=0, ad\ne 0\\[2mm]
(x, 1-x-{a\over c}, {a\over c}), \ \ \ \mbox{if} \ \ c>0, b=d=0\\[2mm]
(x,0,1-x), \ \ \ \mbox{if} \ \ b=d=0, c<0\\[2mm]
(0,0,1), \ \ \ \mbox{if} \ \ b\ne 0, \, c\leq a, d=0, \ \ \mbox{or} \ \ y=0\\[2mm]
(0, 1-{a\over c}, {a\over c}) \ \ \ \mbox{if} \ \ b\ne 0, \, c>a, d=0, \ \ y>0\\[2mm]
(0,0,1), \ \ \ \mbox{if} \ \ b\ne 0, \, c=-d\ne 0, 0<c\leq a,\\[2mm]
(\tilde{x}, 0, 1-\tilde{x}), \ \ \ \mbox{if} \ \ b=0, \, c=-d\ne 0, c\leq a,\\[2mm]
(0,0,1), \ \ \ \mbox{if} \ \ b\ne 0, \, c=-d, a<c, y=0,\\[2mm]
(x,0,1-x), \ \ \ \mbox{if} \ \ b=0, \, c=-d, a<c, y=0,\\[2mm]
(0, 1-{a\over c}, {a\over c}) \ \ \ \mbox{if} \ \ a<c \ \ y>0,
\end{array}\right.$$
where $\bar x$, $\bar y$ and $\tilde x$ are some functions of the initial point $(x,y,z)$.
\end{thm}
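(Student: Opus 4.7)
The plan is to recognize that Theorem \ref{t1} is simply a compilation of the case-by-case limit results already established in Section 3 as formulas (\ref{u1})--(\ref{u9}) and their immediate variants. Consequently, the proof is chiefly one of \emph{bookkeeping}: check that the hypothesis $cd(c+d)=0$ is exhausted by the cases treated in the subsections, and match each clause of the piecewise formula to the correct subsection.

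First I would partition the locus $cd(c+d)=0$ into the four regimes used in Section 3:
\emph{(i)} $c=d=0$;
\emph{(ii)} $c=0$, $d\neq 0$;
\emph{(iii)} $c\neq 0$, $d=0$;
\emph{(iv)} $c=-d\neq 0$.
Within each regime the operator degenerates in a specific way (linear in one coordinate, or with a reduced nonlinear block), and the subsections further split by the vanishing of $a$, $b$ (and, in a few cases, the sign of $c$ or $d$, or whether the starting coordinate $y$ is zero). For each of the seventeen rows of the theorem I would cite the matching formula: the first four rows follow from Subsection 3.1 via (\ref{u1})--(\ref{u2}); the rows indexed by $c=0,d\neq 0$ come from Subsection 3.2 via (\ref{u3})--(\ref{u5}); the rows indexed by $c\neq 0, d=0$ come from Subsection 3.3 via (\ref{u6})--(\ref{u7}); and the final rows come from Subsection 3.4 via (\ref{u8})--(\ref{u9}), using in the last $c>a$ subcase the forward reference to Theorem \ref{ts}.

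Second, I would verify that the hypothesis $cd(c+d)=0$ is covered: factoring, this is equivalent to $c=0$ or $d=0$ or $c+d=0$, which is exactly the union of regimes (i)--(iv) (with (i) being the overlap of the first two). For the overlapping boundaries (e.g.\ $c=d=0$, which formally lies in both (i) and (ii)/(iii)) I would check that the subcases give the same limit; this is immediate because setting the second vanishing parameter reduces the nonlinear formula to the linear one.

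The hard part, modest as it is, will be the bookkeeping of the boundary cases and the subcases where the classification is not fully disjoint, in particular the distinction between $y=0$ and $y>0$ trajectories in (iii) and (iv), which is necessary because when $b=0$ the segment $\{y=0\}$ is pointwise fixed while for $y>0$ the dynamics is genuinely one-dimensional and converges to $(0,1-a/c,a/c)$. Once these are catalogued carefully, the proof consists of nothing more than stating, for each of the seventeen lines, the equation number from Section 3 that established it, and noting that in the $c=-d>a$ case the result is the specialization of Theorem \ref{ts} to $d=-c$.
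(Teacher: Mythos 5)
Your proposal matches the paper exactly: Theorem \ref{t1} is introduced there with the words ``Summarizing above mentioned results (i.e., (\ref{u1}), (\ref{u2}), \dots, (\ref{u9})) we get,'' so its proof is precisely the bookkeeping you describe --- the four regimes $c=d=0$, $c=0\ne d$, $d=0\ne c$, $c=-d\ne 0$ exhaust $cd(c+d)=0$, each row is one of the displayed limits from Subsections 3.1--3.4, and the $c=-d>a$ subcase is deferred to Theorem \ref{ts}. No gap; same approach.
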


\section{Case $cd(c+d)\ne 0$}

\subsection{Fixed points of the operator (\ref{b3})}

\begin{defn}\cite{De}. A fixed point $p$ for $F: R^{m}\rightarrow R^{m}$ is called \emph{hyperbolic} if the Jacobian matrix $\textbf{J}=\textbf{J}_F$ of the map $F$ at the point $p$ has no eigenvalues on the unit circle.

There are three types of hyperbolic fixed points:

(1) $p$ is an attracting fixed point if all of the eigenvalues of $\textbf{J}(p)$ are less than one in absolute value.

(2) $p$ is an repelling fixed point if all of the eigenvalues of $\textbf{J}(p)$ are greater than one in absolute value.

(3) $p$ is a saddle point otherwise.
\end{defn}

To find fixed points of operator $V$ given by (\ref{b3}) we have to solve $V(x)=x$, i.e.
\begin{equation}\label{fp}
\begin{cases}
x=x(1-b+dy)\\
y=y(1-a-dx+cz)\\
z=z(1-cy)+ay+bx.
\end{cases}
\end{equation}

\begin{lemma}\label{lf} The fixed points of the operator (\ref{b3}) are
$$\bar{\lambda}_{0}=(0,0,1), \ \ \bar{\lambda}_{1}=\left(0,1-\frac{a}{c},\frac{a}{c}) \ \ (c\geq a, c\ne 0\right), $$  and  $$\bar{\lambda}_{2}=\left(\frac{cd-ad-bc}{d(c+d)},\frac{b}{d},\frac{a-b+d}{c+d}\right) $$
where there are conditions to the parameters, in other words we consider all coordinates of $\bar{\lambda}_{2}$ between 0 and 1:
 \begin{equation}\label{conpar}
 d>0,\ \ d\geq b, \ \ a-b\leq c, \ \ c\geq0, \ \ cd-ad-bc \geq 0.
  \end{equation}
\end{lemma}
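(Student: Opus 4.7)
The plan is to solve the fixed point system (\ref{fp}) by a direct case analysis exploiting the factored form of its first two equations. First I rewrite the first two equations of (\ref{fp}) as
\begin{equation*}
x(dy-b)=0, \qquad y(cz-dx-a)=0,
\end{equation*}
so each splits into a dichotomy: $x=0$ or $y=b/d$ (permissible since $d\ne 0$ under $cd(c+d)\ne 0$), and $y=0$ or $cz-dx=a$. The idea is to enumerate the four resulting cases, using the simplex constraint $x+y+z=1$ to close up each subsystem.

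In the case $x=y=0$ the constraint yields $\bar\lambda_0=(0,0,1)$. In the case $x=0$, $y\ne 0$ the relation $cz=a$ gives $z=a/c$ and hence $y=1-a/c$, which lies in $[0,1]$ precisely when $c\ge a$ and $c\ne 0$; this is $\bar\lambda_1$. In the generic case $x\ne 0$, $y\ne 0$ the system collapses to
\begin{equation*}
y=\frac{b}{d},\qquad cz-dx=a,\qquad x+z=\frac{d-b}{d},
\end{equation*}
a linear $2\times 2$ system in $(x,z)$ with determinant $-(c+d)\ne 0$; its unique solution is exactly the pair of coordinates appearing in $\bar\lambda_2$. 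The leftover branch $x\ne 0$, $y=0$ can only occur if $b=0$ and then falls inside $\bar\lambda_2$ with $y=0$, so no new isolated fixed point is lost.

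Next I would verify that the third equation of (\ref{fp}) is automatically satisfied: the invariance $x^{(1)}+y^{(1)}+z^{(1)}=x+y+z$ noted at the start of Section~2 implies that summing the three fixed-point equations returns the identity $1=1$, so the third equation is a linear consequence of the first two together with $x+y+z=1$. This avoids a separate substitution check.

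Finally, the inequality list (\ref{conpar}) is obtained by translating $\bar\lambda_2\in S^2$ coordinate by coordinate: $y=b/d\in[0,1]$ with $b\ge 0$ and $d\ne 0$ forces $d>0$ and $d\ge b$; the inequality $z=(a-b+d)/(c+d)\le 1$ simplifies to $a-b\le c$; and $x=(cd-ad-bc)/(d(c+d))\ge 0$ together with $d>0$ and $c+d>0$ is exactly $cd-ad-bc\ge 0$, which under $d\ge b$ already forces $c\ge 0$. The only part requiring care is the bookkeeping of these feasibility conditions and checking that they are jointly consistent with the standing range (\ref{cond}); apart from this the entire argument is routine linear algebra and there is no genuine obstacle.
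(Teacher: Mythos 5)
Your proof is correct and follows essentially the same route as the paper's: the same four-way case split on whether $x$ and $y$ vanish, with the same resolution of each case (the paper solves the full $3\times 3$ system directly where you solve a $2\times 2$ linear system after using $x+y+z=1$). Your observation that the third fixed-point equation is redundant (mass conservation plus the simplex constraint) is a small streamlining the paper does not make explicit, and your treatment of the leftover branch $x\ne 0$, $y=0$, $b=0$ --- which in fact yields a whole segment $(x,0,1-x)$ of fixed points not listed in the lemma, as the paper itself later uses in Theorem~\ref{ts}(2) --- is no less complete than the paper's own proof, which simply dismisses that case by assuming $b\ne 0$.
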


\begin{proof} Consider the following cases

\textbf{Case} $x=0, y=0$. It is easy to see from (\ref{fp}) that $(0,0,1)$ is a fixed point.

\textbf{Case} $x=0, y\neq 0$. From  (\ref{fp}) we get system of equations
 $$x=0, \ \ y(1-a+cz)=y, \ \ z(1-cy)+ay=z,$$
 which has a unique solution $\left(0, 1-\frac{a}{c}, \frac{a}{c}\right)$ for $c>0, c\geq a.$

\textbf{Case} $x\neq 0, y=0$. In this case we do not have fixed point, because from the
first equation of (\ref{fp}) we get $x=x(1-b)$ which has no non-zero solution for $b\ne 0$.

\textbf{Case} $x\neq 0, y\neq 0$. In this case the system (\ref{fp}) is reduced to the system
$$1=1-b+dy \ \ 1=1-a-dx+cz \ \ z=z(1-cy)+ay+bx.$$
from which we get the third fixed point $\left(\frac{cd-ad-bc}{d(c+d)},\frac{b}{d},\frac{a-b+d}{c+d}\right).$
\end{proof}

By using $x+y+z=1$ in (\ref{b3}),  we obtain the following mapping:

\begin{equation}\label{b2}
W:
\begin{cases}
x'=x(1-b+dy)\\
y'=y(1-a+c-(c+d)x-cy)
\end{cases}  \ \ {\rm where} \ \ x+y\leq1.
\end{equation}

For the operator $W$ there are three fixed points with conditions (\ref{conpar}):
$$\lambda_{0}=(0,0), \ \ \lambda_{1}=\left(0,\frac{c-a}{c}\right) \ \ (c>0, c\geq a),  \ \ \text{and }\ \ \lambda_{2}=\left(\frac{cd-ad-bc}{d(c+d)},\frac{b}{d}\right).$$

\begin{pro}\label{pp1} The following relations are true:
\begin{itemize}
\item[(1)]
$$\lambda_{0}=\left\{\begin{array}{lll}
{\rm nonhyperbolic}, \ \ {\rm if} \ \  b=0 \ \ {\rm or} \ \ a=c\\[2mm]
{\rm attractive}, \ \ \ \ {\rm if} \ \  a>c\\[2mm]
{\rm saddle}, \ \ \  \ \ \ {\rm if}  \ \ a<c
\end{array}\right.$$

\item[(2)] $$\lambda_{1}=\left\{\begin{array}{lll}
{\rm nonhyperbolic}, \ \ {\rm if} \ \ a=c \ \ {\rm or} \ \ b=d(1-\frac{a}{c}) \\[2mm]
{\rm attractive}, \ \ {\rm if} \ \  b>d(1-\frac{a}{c})  \\[2mm]
{\rm saddle}, \ \ {\rm if}  \ \  b<d(1-\frac{a}{c})
\end{array}\right.$$

\item[(3)] $$\lambda_{2}=\left\{
\begin{array}{ll}
{\rm nonhyperbolic}, \ \ {\rm if} \ \ b=0 \ \  {\rm or} \ \ c\leq a \ \  {\rm or} \ \ cd-ad-bc=0 \ \  {\rm or}  \ \ b=d\\[2mm]
{\rm attractive}, \ \ {\rm if} \ \  c>0, c>a, d>b>0, cd-ad-bc>0
\end{array}\right.$$
\end{itemize}
\end{pro}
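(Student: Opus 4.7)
The plan is to linearize at each fixed point and read off the moduli of the eigenvalues. From (\ref{b2}) the Jacobian is
\[
J_{W}(x,y)=\begin{pmatrix}
1-b+dy & dx\\[1mm]
-(c+d)y & 1-a+c-(c+d)x-2cy
\end{pmatrix}.
\]
At $\lambda_{0}=(0,0)$ this matrix is diagonal with eigenvalues $1-b$ and $1-a+c$. The constraints (\ref{cond}) give $0\le b\le 1$ and $|c-a|\le 1$, so $|1-b|=1\iff b=0$ and $|1-a+c|=1\iff a=c$, while $|1-a+c|<1\iff a>c$; the three cases of item (1) follow at once. At $\lambda_{1}=(0,(c-a)/c)$ the Jacobian is still lower triangular because $x=0$, with diagonal eigenvalues $\mu_{1}=1-b+d(1-a/c)$ and $\mu_{2}=1+a-c$. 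The same inequality $c\le 1+a$ forces $|\mu_{2}|<1\iff c>a$ and $|\mu_{2}|=1\iff a=c$, while $\mu_{1}=1\iff b=d(1-a/c)$ and, under $c>a$, $|\mu_{1}|<1\iff b>d(1-a/c)$. This yields item (2).

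The main work is item (3) for $\lambda_{2}=(x_{2},b/d)$ with $x_{2}=(cd-ad-bc)/(d(c+d))$. Substituting $dy_{2}=b$ and $(c+d)x_{2}=P/d$, where $P:=cd-ad-bc$, a direct calculation gives
\[
\mathrm{tr}\,J_{W}(\lambda_{2})=2-\tfrac{bc}{d},\qquad \det J_{W}(\lambda_{2})=1+\tfrac{b(P-c)}{d}.
\]
I would then apply the Schur--Cohn (Jury) criterion: both eigenvalues lie strictly inside the open unit disk iff the three quantities
\[
1-\mathrm{tr}+\det=\tfrac{bP}{d},\qquad 1+\mathrm{tr}+\det=4+\tfrac{b(P-2c)}{d},\qquad 1-\det=\tfrac{b(c-P)}{d}
\]
are all strictly positive. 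The vanishing of any one places an eigenvalue on the unit circle and yields the nonhyperbolic cases listed in the proposition: $b=0$ and $P=0$ come from the first; the validity condition $P\ge 0$ in (\ref{conpar}) combined with $c\le a$ forces $P=0$; and $b=d$ combined with (\ref{conpar}) forces $a=0$, hence $P=0$ as well.

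The remaining step is to verify that the three Jury quantities are strictly positive under the attractive hypotheses $c>0$, $c>a$, $d>b>0$, $P>0$. The first is immediate. For the third, the identity $c-P=c(1+b-d)+ad$ is positive because $d\le 1-a<1+b$ by (\ref{cond}) and $c>0$. The main obstacle is positivity of the middle Jury quantity, which is equivalent to $d(4-ab+bc)>bc(b+2)$. Here I would use $b<d$ to reduce the required bound to $4-ab>2c$, and then combine $c\le 1+a$ with $a+b\le a+d\le 1$ (from (\ref{cond})) to obtain $ab<a(1-a)\le 1-a<2(1-a)$, so that $2c\le 2(1+a)<4-ab$. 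The saddle regions in items (1) and (2) are immediate from the same diagonal eigenvalue calculations by tracking which eigenvalue has modulus exceeding one.
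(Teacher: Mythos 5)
Your proposal is correct, and for items (1) and (2) it coincides with the paper's argument: the Jacobian of (\ref{b2}) is diagonal at $\lambda_0$ and lower triangular at $\lambda_1$, so the classification is read off the diagonal entries exactly as in the paper. For item (3) you take a genuinely different route. The paper computes the eigenvalues of $\textbf{J}(\lambda_2)$ explicitly as $\mu_{1,2}=\frac{2d-bc\pm\sqrt D}{2d}$ with $D=b(bc^2-4d(cd-ad-bc))$, splits into the real case $D\ge 0$ (where it checks $\mu_{1,2}<1$ and then $\mu_{1,2}>-1$, the latter being the inequality $d>\frac{bc(b+2)}{4+bc-ab}$) and the complex case $D<0$ (where it checks the common modulus is below $1$), verifying each threshold by a separate chain of inequalities. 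You instead apply the Schur--Cohn/Jury criterion to the characteristic polynomial $\lambda^2-\mathrm{tr}\,\lambda+\det$, and your computations $\mathrm{tr}=2-\frac{bc}{d}$, $\det=1+\frac{b(P-c)}{d}$ with $P=cd-ad-bc$ are correct, as are the three Jury quantities $\frac{bP}{d}$, $4+\frac{b(P-2c)}{d}$, $\frac{b(c-P)}{d}$. This unifies the real and complex cases: your middle quantity is precisely the paper's condition $d(4+bc-ab)>bc(b+2)$, and your reduction via $b<d$ to $4-ab>2c$ and then to $ab<2(1-a)$ using $c\le 1+a$ and $b<d\le 1-a$ from (\ref{cond}) is a valid alternative to the paper's substitution chain; your third quantity, via the identity $c-P=c(1+b-d)+ad>0$, is exactly the statement $\det<1$, which is the correct modulus condition in the complex-eigenvalue case and is arguably cleaner than the paper's threshold $a<\frac{c(2d^2-b^2c)}{2d^2}$. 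The identification of the nonhyperbolic cases ($b=0$, $P=0$, and the fact that $c\le a$ or $b=d$ force $P=0$ under (\ref{conpar})) matches the paper. One small imprecision: vanishing of the quantity $1-\det$ alone does not in general force an eigenvalue onto the unit circle (real roots $\lambda,1/\lambda$ are possible); but since every degenerate case you list makes the first quantity $\frac{bP}{d}$ vanish, giving the eigenvalue $1$ directly, this does not affect your argument.
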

\begin{proof}

(1) Note that the Jacobian of the system (\ref{b2}) has the form

\begin{equation}\label{jac}
\textbf{J}=\begin{bmatrix}
1-b+dy & dx\\
-(c+d)y & 1-a+c-(c+d)x-2cy
\end{bmatrix}
\end{equation}

The Jacobian at the fixed point $\lambda_{0}=(0,0)$ has the form

\begin{center}
$\textbf{J}(\lambda_{0})=\begin{bmatrix}
1-b & 0 \\
0 & 1-a+c
\end{bmatrix}$
\end{center}
Clearly, if $b=0$ or $a=c$  then the eigenvalues $\mu_{1}$ or $\mu_{2}$ of the matrix $\textbf{J}(\lambda_{0})$ are equal to one, i.e., $\lambda_{0}$ is nonhyperbolic and in all other cases $\lambda_{0}$ is hyperbolic. \\
If $b>0, a\neq c$ then one eigenvalue $\mu_{1}=1-b<1$ is correct for all $b$.   If $a>c$ and $a<c$ then $|\mu_{2}|<1$ and $|\mu_{2}|>1$ respectively. Hence,  $\lambda_{0}$ is an attracting and saddle point when $a>c$ and $a<c$ respectively, repelling case does not occur.

(2) At the point $\lambda_{1}$ the Jacobian is:

\begin{center}
$\textbf{J}(\lambda_{1})=\begin{bmatrix}
1-b+\frac{(c-a)d}{c} & 0\\
-\frac{(c+d)(c-a)}{c} & 1+a-c
\end{bmatrix}$
\end{center}
It is easy to see that if $a=c$  or $b=d(1-\frac{a}{c})$ then at least one eigenvalue of the matrix $\textbf{J}(\lambda_{1})$  is equal to one, so $\lambda_{1}$ is nonhyperbolic.\\
If $a\neq c$ then by the condition $c\geq a$  one eigenvalue $\mu_{1}=1-c+a$ is always less than one.  Moreover, if $b>d(1-\frac{a}{c})$ then   $\lambda_{1}$ is an attractive, and if $b<d(1-\frac{a}{c})$ then $\lambda_{1}$ is a saddle point. Here also repelling case does not exists.

(3) Next we consider the Jacobian at the fixed point $\lambda_{2}$:

\begin{center}
$\textbf{J}(\lambda_{2})=\begin{bmatrix}
1 & \frac{cd-ad-bc}{c+d}\\
-\frac{(c+d)b}{d} & \frac{d-bc}{d}
\end{bmatrix}$
\end{center}
$\circ$  If $b=0$ or $cd-ad-bc=0$ then $\lambda_{2}$ is nonhyperbolic. \\
$\circ$  If $c=0$ then $a=0$ and $\lambda_{2}=(0,\frac{b}{d})$ is nonhyperbolic again.\\
$\circ$ If $b=d$ then by the condition $cd-ad-bc\geq0$ we  have $a=0$, it means   $\lambda_{2}$ is nonhyperbolic.\\
$\circ$ If $0<c\leq a$ then by the condition $cd-ad-bc\geq0$ we have $c=a$ and $b=0$ and $\lambda_{2}$ is nonhyperbolic again.\\
$\circ$  If $d>b>0, c>a, c>0$ then the eigenvalues of the matrix $\textbf{J}(\lambda_{2})$ are:
$$\mu_{1,2}=\frac{2d-bc\pm\sqrt{D}}{2d},$$
where
$$D=b(bc^{2}-4d(cd-ad-bc)).$$
 We study this eigenvalues:\\
If $b\geq \frac{4d^{2}(c-a)}{c(c+4d)}$ then
$$D=b^{2}c^{2}-4bd(cd-ad-bc)\geq 0$$ and since $D< b^{2}c^{2}$ we have $\mu_{1,2}<1$. Furthermore, we have to check the condition  $\mu_{1,2}>-1$:
If $d>\frac{bc(b+2)}{4+bc-ab}$ then $\lambda_{2}$ is a attractive fixed point\\
If $d<\frac{bc(b+2)}{4+bc-ab}$ then $\lambda_{2}$ is a saddle fixed point\\
If $d=\frac{bc(b+2)}{4+bc-ab}$ then $\lambda_{2}$ is nonhyperbolic fixed point.\\
But by the conditions (\ref{cond})
$$\frac{bc(b+2)}{4+bc-ab}-d=\frac{b^{2}c+2bc-4d-bcd+abd}{4+bc-ab}<\frac{d^{2}c+2dc-4d-bcd+cbd}{4+bc-ab}=$$
$$=\frac{d(cd+2c-4)}{4+bc-ab}<\frac{d((1+a)(1-a)+2(1+a)-4)}{4+bc-ab}=-\frac{d(a-1)^{2}}{4+bc-ab}<0.$$
It follows that $$d>\frac{bc(b+2)}{4+bc-ab}.$$
If $b< \frac{4d^{2}(c-a)}{c(c+4d)}$ then $D<0$ and we obtain the complex eigenvalues. \\
If $a<\frac{c(2d^2-b^{2}c)}{2d^{2}}$ then $|\mu_{1}|=|\mu_{2}|=\frac{\sqrt{(2d-bc)^{2}+D}}{2d}<1$ and $\lambda_{2}$ is attracting fixed point\\
If $a>\frac{c(2d^2-b^{2}c)}{2d^{2}}$ then $|\mu_{1}|=|\mu_{2}|>1$ and $\lambda_{2}$ is repelling fixed point\\
If $a=\frac{c(2d^2-b^{2}c)}{2d^{2}}$ then $|\mu_{1}|=|\mu_{2}|=1$ and $\lambda_{2}$ is nonhyperbolic fixed point.
By the conditions to the parameters again we have:
$$\frac{c(2d^{2}-b^{2}c)}{2d^{2}}-a=\frac{2cd^2-b^{2}c^2-2ad^2}{2d^{2}}=\frac{2cd^2-2ad^2-2bcd+2bcd-b^{2}c^2}{2d^{2}}=$$
$$=\frac{2d(cd-ad-bc)+bc(2d-bc)}{2d^{2}}>\frac{2d(cd-ad-bc)+bc(2b-bc)}{2d^{2}}>0  $$
( since  $cd-ad-bc>0, c\leq 2$).
It follows that $$a<\frac{c(2d^2-b^{2}c)}{2d^{2}}.$$
Consequently, for $cd-ad-bc>0, c>a, c>0, d>b>0$ the fixed point $\lambda_{2}$ is attracting.
\end{proof}
\subsection{\bf The limit points}

 We consider the operator (\ref{b2}) with initial point $\lambda^{(0)}=(x^{(0)},y^{(0)})$ and
 will study when some of fixed points $\lambda_{0}, \lambda_{1}$ and $\lambda_{2}$ is a limit point.

If $x^{(0)}=y^{(0)}=0$ then the limit point is $\lambda_{0}$.

The invariant sets with respect to $W$, (\ref{b2}),
 are $$M_{1}=\{\lambda=(x,y): x=0\}, \ \  M_{2}=\{\lambda=(x,y): y=0\},$$
  i.e., $W(M_{i})\subset M_{i}, i=1,2.$

Consider restriction of the operator (\ref{b2}) on invariant sets:

{\it Case $M_1$}. In this case the restriction is
$$y^{(1)}=y(1-a+c-cy)=\varphi(y), \ \ y\in [0,1].$$
Note that $\varphi(y)$ has a unique fixed point $y=0$ if $a>c$ and two fixed points
$y=0$ , $y=1-{a\over c}$ if $c\geq a$.
Since $\varphi'(0)=1-a+c$  the fixed point
$0$ is attracting iff $c<a$, non-hyperbolic if $c=a$ and repelling if $c>a$.
Similarly, by $\varphi'(1-{a\over c})=1+a-c$ we have that $1-{a\over c}$ is
attracting if $c>a$ and non-hyperbolic if $c=a$. Moreover, for $a\geq c$ the sequence $y^{(n)}=\varphi^n(y^{(0)})$
is a decreasing.  Thus for any $y^{(0)}\in [0,1]$ we have
$$\lim_{n\to\infty}y^{(n)}=\left\{\begin{array}{ll}
0, \ \ \mbox{if} \ \ a\geq c\\[2mm]
1-{a\over c}, \ \ \mbox{if} \ \ a<c.
\end{array}
\right.$$

{\it Case $M_2$}. In this case the restriction is
$$x^{(1)}=(1-b)x, \ \ x\in [0,1].$$
It is easy to see that
$$\lim_{n\to\infty}x^{(n)}=\lim_{n\to\infty}(1-b)^nx=\left\{\begin{array}{ll}
0, \ \ \mbox{if} \ \ b>0, \ \ or \ \ x=0,\\[2mm]
x, \ \ \mbox{if} \ \ b=0.
\end{array}
\right.$$

Thus behavior of trajectories are clear if the initial point is taken on invariant sets $M_i$, $i=1,2$.

We assume now that  $x^{(0)}\neq0, y^{(0)}\neq0.$

\emph{\textbf{Case 1.}} If $c<a$ then $\lambda_{1}$ and $\lambda_{2}$ do not
belong to the simplex (because they have negative coordinates), so $\lambda_{0}$ is
a unique  attracting fixed point.
We show that the fixed point is globally attracting, i.e. $\forall\lambda^{(0)}\in S^{2}$
one has $$\lim\limits_{n\rightarrow\infty}{\lambda}^{(n)}=\bar{\lambda}_{0}.$$
It is easy to see that
$$z^{(1)}=z^{(0)}(1-cy^{(0)})+ay^{(0)}+bx^{(0)}=z^{(0)}-y^{(0)}(cz^{(0)}-a)+bx^{(0)}$$
$$\geq z^{(0)}-y^{(0)}(cz^{(0)}-a)\geq z^{(0)} \ \ (\text{since}\ \ cz^{(0)}-a\leq0).$$
Consequently, the sequence $z^{(n)}$ is increasing and bounded, i.e., it has a limit:
$$\lim\limits_{n\rightarrow\infty}z^{(n)}=\bar{z}\geq0,$$
Next, we consider two cases:

1.1. If $d<0$ then $x^{(1)}=x^{(0)}(1-b+dy^{(0)})\leq x^{(0)}(1-b)\leq x^{(0)}$,
by iterating we get that $x^{(n+1)}\leq x^{(n)},  \forall n\in N$ and
there exists the limit $$\lim\limits_{n\rightarrow\infty}x^{(n)}=\bar{x}.$$
 By  $x^{(n)}+y^{(n)}+z^{(n)}=1$ it follows that there exists the limit of sequence $y^{(n)}$, $$\lim\limits_{n\rightarrow\infty}y^{(n)}=\bar{y}.$$

1.2. If $d\geq0$ then
$$y^{(1)}=y^{(0)}(1-a-dx^{(0)}+cz^{(0)})\leq y^{(0)}(1-a-dx^{(0)}+c)\leq y^{(0)}(1-dx^{(0)})\leq y^{(0)}$$
and similarly, in this case there exist limits of sequences $x^{(n)}, y^{(n)}$,  i.e.,
$$\lim\limits_{n\rightarrow\infty}\lambda^{(n)}=\bar{\lambda}=(\bar{x},\bar{y},\bar{z}).$$
Since a limit point must be a fixed point and in this case we have a unique fixed point on the simplex, we get  $$\lim\limits_{n\rightarrow\infty}\lambda^{(n)}=\bar{\lambda}_{0}=(0,0,1).$$
 Thus, $\lambda_{0}$ is a globally attracting fixed point.

\emph{\textbf{Case 2.}} If $c>a, d<b$  then  $x^{(1)}=x^{(0)}(1-b+dy^{(0)})\leq x^{(0)}(1-b+d)<x^{(0)}$
and we check the monotonicity of the sequence $y^{(n)}$.
In this case the following lemma is useful:

\begin{lemma}\label{l2} If $c>a, d<b$ for the operator (\ref{b3}) we have

 a) If $c+d\geq0$ then the set $\mathcal E=\{(x,y,z):z\geq\frac{a+dx}{c}\}$ is an invariant.

 b) If $c+d<0$ then the set $F=\{(x,y,z):z<\frac{a+dx}{c}\}$ is an invariant.
\end{lemma}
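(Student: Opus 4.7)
The plan is to reformulate the defining inequality of $\mathcal{E}$ (resp.\ $F$) as the sign of a single auxiliary quantity, and then track how that quantity transforms under one step of the dynamics. Set
\[
u=u(x,y,z)=cz-a-dx,
\]
so $\mathcal{E}=\{u\geq 0\}\cap S^{2}$ and $F=\{u<0\}\cap S^{2}$. Let $u^{(1)}=cz^{(1)}-a-dx^{(1)}$. Plugging in the formulas from (\ref{b3}) and gathering like terms (using $z^{(1)}=z(1-cy)+ay+bx$ and $x^{(1)}=x(1-b+dy)$), one obtains, after a direct calculation, the identity
\[
u^{(1)}=u\,(1-cy)+(c+d)\,x\,(b-dy).
\]
This factorization is the technical heart of the proof; I expect verifying it cleanly to be the one mildly nontrivial step, though it is purely algebraic.

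Next, I would record a uniform sign for the factor $b-dy$ on $S^{2}$. Since $y\in[0,1]$ and the lemma assumes $d<b$, we have $dy\le\max(d,0)\le b$, so $b-dy\ge 0$ everywhere on $S^{2}$. This handles the second summand in the identity once the sign of $c+d$ is known.

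For part (a), $c+d\ge 0$, so $(c+d)x(b-dy)\ge 0$. It remains to show $1-cy\ge 0$ on $\mathcal{E}\cap S^{2}$. Using $y=1-x-z$ and the defining inequality $cz\ge a+dx$ of $\mathcal{E}$,
\[
1-cy=1-c+cx+cz\ge (1-c+a)+(c+d)x.
\]
By the standing condition (\ref{cond}) we have $c\le 1+a$, hence $1-c+a\ge 0$, and by hypothesis $c+d\ge 0$; together with $x\ge 0$ this gives $1-cy\ge 0$. Combined with $u\ge 0$, both terms in the identity are nonnegative, yielding $u^{(1)}\ge 0$, i.e.\ $\mathcal{E}$ is invariant.

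For part (b), $c+d<0$, so the second summand is $\le 0$. Now I would use (\ref{cond}) to get $d\ge-(1-b)$, whence $-d\le 1-b\le 1$, and since $c<-d$ this forces $c<1$. Therefore $cy\le c<1$, so $1-cy>0$ strictly on the whole of $S^{2}$. On $F$ we have $u<0$ strictly, so $u(1-cy)<0$, while the other summand is $\le 0$; thus $u^{(1)}<0$, proving $F$ is invariant. The main conceptual point is that the strict bound $c<1$ in case (b), which is not available in case (a), is exactly what is needed to make $(1-cy)$ strictly positive without reference to the defining inequality of the set.
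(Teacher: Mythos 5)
Your proof is correct, and the identity $u^{(1)}=u(1-cy)+(c+d)x(b-dy)$ checks out (I verified: $u^{(1)}-u(1-cy)=bcx+bdx-d^2xy-cdxy=(c+d)x(b-dy)$). The algebraic core is the same as the paper's: the authors also reduce the invariance inequality to the sign of $(c+d)(b-dy)$ (they drop the harmless factor $x$), and in part (a) they establish $1-cy>0$ from membership in $\mathcal E$ exactly as you do. The difference is in the packaging and in part (b). The paper works with chains of equivalences obtained by dividing by $c(1-cy)$, which forces a case split in (b) on the sign of $1-cy$ (their subcases (i) and (ii)); your formulation as an additive identity avoids any division and lets you read the conclusion off termwise. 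More substantively, your observation that under $c+d<0$ the condition (\ref{cond}) forces $c<-d\le 1-b\le 1$, hence $1-cy>0$ on all of $S^2$, shows that the paper's subcase (ii) ($1-cy<0$) is in fact vacuous, so your argument is both shorter and slightly cleaner. One more point in your favour: the strict inequality needed in part (b) comes in your version from the term $u(1-cy)<0$, whereas the paper asserts the strict inequality $(c+d)(b-dy)<0$, which can fail (e.g.\ when $b=dy$); your decomposition makes it clear that only $(c+d)x(b-dy)\le 0$ is needed there.
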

\begin{proof}  a) For any point $(x,y,z)\in \mathcal E$ we have
$$z\geq\frac{a+dx}{c}\Leftrightarrow 1-x-y\geq\frac{a+dx}{c} \Leftrightarrow cy\leq c-a-(c+d)x.$$
 Then $1-cy\geq 1-(c-a-(c+d)x)>0$. Next we show that
$z^{(1)}\geq\frac{a+dx^{(1)}}{c}$, i.e.
$$z(1-cy)+ay+bx\geq \frac{a+dx(1-b+dy)}{c}$$ which is equivalent to
$$z\geq \frac{a+dx-bdx+d^2xy-acy-bcx}{c(1-cy)}
 \Leftrightarrow \frac{a+dx}{c}\geq \frac{a+dx-bdx+d^2xy-acy-bcx}{c(1-cy)}
 $$ $$  \Leftrightarrow (c+d)(b-dy)\geq0$$ the last inequality is true,
 since $b-dy>0, c+d \geq 0$. Hence,  $z^{(1)}\geq\frac{a+dx^{(1)}}{c}$;

  b) If $(x,y,z)\in F$ then $z<\frac{a+dx}{c}$ and we have to show that $z^{(1)}<\frac{a+dx^{(1)}}{c}$, i.e.
  $$z(1-cy)<\frac{a+dx-bdx+d^2xy-acy-bcx}{c}$$

 \emph{ i)} If $1-cy>0$ then
 $$z<\frac{a+dx-bdx+d^2xy-acy-bcx}{c(1-cy)} \Leftrightarrow
 \frac{a+dx}{c}< \frac{a+dx-bdx+d^2xy-acy-bcx}{c(1-cy)}$$
 from this we have the inequality $(c+d)(b-dy)<0$ which is true;

\emph{  ii)} If $1-cy<0$ then
$$z>\frac{a+dx-bdx+d^2xy-acy-bcx}{c(1-cy)} \Leftrightarrow
\frac{a+dx}{c}> \frac{a+dx-bdx+d^2xy-acy-bcx}{c(1-cy)} $$
$$\Leftrightarrow \frac{dy(c+d)-b(c+d)}{(1-cy)}<0.$$
Hence we have again correct inequality  $(c+d)(dy-b)>0$ (since $c+d<0, dy-b<0$).
Thus,  $z^{(1)}<\frac{a+dx^{(1)}}{c}.$
\end{proof}
  Now we continue the proof of monotonicity of the sequence $y^{(n)}$.
  Using by Lemma \ref{l2} we have that
     if an initial point $\lambda^{0}=(x^{0},y^{0}, z^{0})$ taken from the set $\mathcal E$
     and $c+d\geq0$ then the sequence $y^{(n)}$ is increasing (since
     $y^{(n+1)}=y^{(n)}(1-a-dx^{(n)}+cz^{(n)})\geq y^{(n)}, \forall n\in N$), it has the limit and we obtain
$$\lim\limits_{n\rightarrow\infty}\lambda^{(n)}=\bar{\lambda}_{1},$$
because $\lambda_{1}$ is the unique attracting fixed point in $\mathcal E$.

If $\lambda^{(0)}\notin \mathcal E$ then we consider two cases:

(i)  If $\lambda^{(n)}\notin \mathcal E, \forall n\in N$ then $y^{(n)}$ is
decreasing (since  $y^{(n+1)}<y^{(n)}(1-a-dx+cz^{(n)})<y^{(n)}, \forall n\in N$);

(ii) If $\lambda^{(k)}\in \mathcal E,$ for some $k\in N$ then $y^{(n)}$ is increasing for $n>k.$

Similarly, if an initial point $\lambda^{(0)}=(x^{(0)},y^{(0)}, z^{(0)})$ taken from
the set $F$ and $c+d<0$ then the sequence $y^{(n)}$ is decreasing and for the case
$\lambda^{(n)}\notin F$ the sequence $y^{(n)}$ has limit point again.
Consequently,
if $y^{(0)}=0$ then $$\lim\limits_{n\rightarrow\infty}\lambda^{(n)}=\lambda_{0}.$$
Thus, $\lambda_{0}$ is a saddle fixed point  and $\lambda_{1}$ is globally attracting fixed point.

\emph{\textbf{Case 3.}} $c>a, d>b>0.$ In this case numerical analysis show
that the coordinates of the vector $\lambda^{(n)}$ are not monotone, so it
is not easy to see the limit properties of the trajectory.
Therefore we study these limits numerically for concrete values of parameters.

3.1. $a=1/4, b=1/2, c=1, d=3/4.$ Then by the system (\ref{b2}) we get
\begin{equation}\label{bb2}
\begin{cases}
x'=0.5x+0.75xy\\
y'=1.75y-1.75xy-y^{2}
\end{cases}
\end{equation}

For this system $\lambda_{2}=(\frac{1}{21};\frac{2}{3};\frac{2}{7})\approx(0.0476;0.6667;0.2857).$
Next we divide the simplex to the four parts as following (Fig.1):
\begin{figure}
\includegraphics[width=7cm]{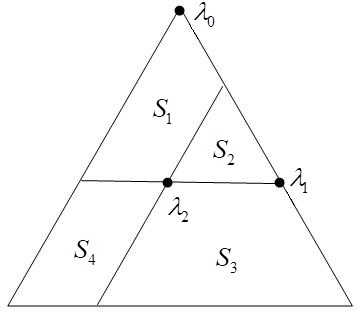}
\caption{}\label{f3}
\end{figure}

By using Wolfram Mathematica 7.0 we find limit points of initial
points from $S_{1},S_{2}, S_{3}$ and $S_{4}$ respectively (Fig.2).

i) If $(x^{(0)}, y^{(0)})=(0.1,0.6)\in S_{1}$ then after 100 iterations we get

   $$ (x^{(100)}, y^{(100)})=(0.0476346, 0.666636).$$

 ii) If $(x^{(0)}, y^{(0)})=(0.02,0.68)\in S_{2}$ then

   $$(x^{(100)}, y^{(100)})=(0.0475566, 0.666789).$$

iii) If $(x^{(0)}, y^{(0)})=(0.05,0.68)\in S_{3}$ then

   $$(x^{(100)}, y^{(100)})= (0.0476215, 0.666662).$$

 iv) If $(x^{(0)}, y^{(0)})=(0.07,0.66)\in S_{4}$ then

   $$(x^{(100)}, y^{(100)})=(0.0476302, 0.666645).$$

\begin{figure}
\includegraphics[width=10cm]{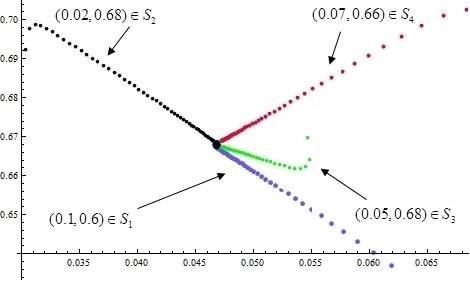}
\caption{blue:$(0.1,0.6)\in S_{1}$; black: $(0.02,0.68)\in S_{2}$;
green: $(0.05,0.68)\in S_{3}$; red: $(0.07,0.66)\in S_{4}$.  }\label{fig}
\end{figure}

3.2. $a=1/6, b=1/3, c=4/3, d=2/3.$ Then by the system (\ref{b2}) we have
\begin{equation}\label{bb3}
\begin{cases}
x'=\frac{2}{3}x+\frac{2}{3}xy\\
y'=\frac{13}{6}y-2xy-\frac{4}{3}y^{2}
\end{cases}
\end{equation}

For this system $\lambda_{2}=(0.25;0.5;0.25)$ and we have (Fig.3)

i) If $(0.26,0.48)\in S_{1}$ then
   $$(x^{(100)}, y^{(100)})=(0.25,0.5).$$

ii) If $(0.22,0.52)\in S_{2}$ then
   $$(x^{(100)}, y^{(100)})=(0.25,0.5).$$

iii) If $(0.24,0.52)\in S_{3}$ then
   $$(x^{(100)}, y^{(100)})=(0.25,0.5).$$

 iv) If $(0.29,0.48)\in S_{4}$ then
   $$(x^{(100)}, y^{(100)})=(0.25,0.5).$$

\begin{figure}
\includegraphics[width=10cm]{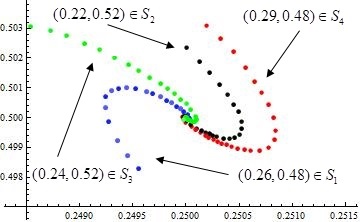}
\caption{blue: $(0.26,0.48)\in S_{1}$; black: $(0.22,0.52)\in S_{2}$;
green: $(0.24,0.52)\in S_{3}$; red: $(0.29,0.48)\in S_{4}$. }\label{fig}
\end{figure}

To sum up  for the initial point $\lambda^{(0)}=(x^{(0)},y^{(0)}, z^{(0)})\in intS^2$  we have
$$\lim\limits_{n\rightarrow\infty}\lambda^{(n)}=\bar{\lambda}_{2}.$$

We get phase portraits of the trajectories of (\ref{b3}) shown in Fig.4, Fig.5 and Fig.6.

\smallskip
\begin{figure}
\includegraphics[width=8cm]{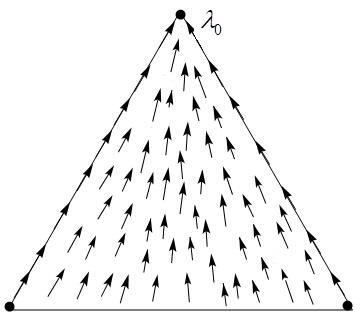}
\caption{If  $c<a$}\label{f1}
\end{figure}
\begin{figure}
\includegraphics[width=8cm]{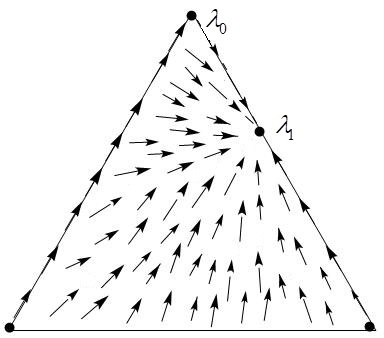}
\caption{If $c>a, d<b$}\label{f2}
\end{figure}
\begin{figure}
\includegraphics[width=8cm]{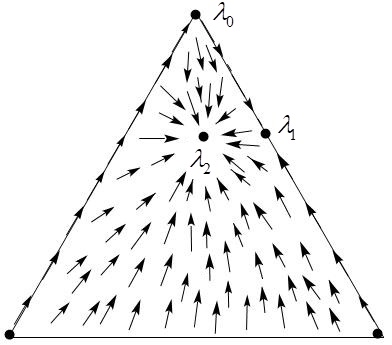}
\caption{If $c>a, d>b$}\label{f2}
\end{figure}
Summarizing we obtain the following:

\begin{thm}\label{ts} Let $cd(c+d)\ne 0$ and $\lambda^{(0)}=(x^{(0)},y^{(0)},z^{(0)})$ be an initial point.
Then  the following three cases hold

(1) If $c<a$
 then  $$\lim\limits_{n\rightarrow\infty}\lambda^{(n)}=\bar{\lambda}_{0}=(0,0,1),$$

(2) If $c>a, d<b$ then
\begin{equation}
\lim\limits_{n\rightarrow\infty}\lambda^{(n)}=
\begin{cases}
(x^{(0)},0,1-x^{(0)}) \hspace{22mm} if \hspace{20mm}y^{(0)}=0, b=0\\
\bar{\lambda}_{0} \hspace{45mm}  if \hspace{20mm}y^{(0)}=0,b\neq 0 \\
\bar{\lambda}_{1} \hspace{45mm}  if \hspace{20mm}y^{(0)}>0
\end{cases}
\end{equation}
(3) If   $c>a, d>b>0$   then
\begin{equation}\label{oo}
\lim\limits_{n\rightarrow\infty}\lambda^{(n)}=
\begin{cases}
\bar{\lambda}_{2}=\left(\frac{cd-ad-bc}{d(c+d)},\frac{b}{d},\frac{a-b+d}{c+d} \right) \hspace{5mm} if \hspace{10mm}x^{(0)}\neq0, y^{(0)}\neq0\\
\bar{\lambda}_{0} \hspace{45mm}  if \hspace{20mm}y^{(0)}=0 \\
\bar{\lambda}_{1} \hspace{45mm}  if \hspace{20mm}x^{(0)}=0
\end{cases}
\end{equation}
\end{thm}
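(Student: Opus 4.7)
The plan is to reduce the three-dimensional operator $V$ to the two-dimensional operator $W$ given in (\ref{b2}) via the substitution $z=1-x-y$, so that by Lemma \ref{lf} the candidate limit points on $S^2$ are precisely $\bar\lambda_0$, $\bar\lambda_1$ (present only when $c\geq a$) and $\bar\lambda_2$ (present only when (\ref{conpar}) holds). Since $V$ is continuous, any limit of a convergent trajectory is a fixed point; the strategy in each of the three cases is therefore to (a) show which fixed points lie in $S^2$, (b) extract monotonicity of at least one coordinate so that the trajectory converges, and (c) identify the limit using the fixed-point list. Behavior on the invariant faces $M_1=\{x=0\}$ and $M_2=\{y=0\}$ has already been handled by the one-dimensional analysis preceding the theorem, so I will focus on the case $x^{(0)}y^{(0)}>0$.

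For Case (1), $c<a$, I would first check that the nontrivial coordinates of $\bar\lambda_1$ and $\bar\lambda_2$ are negative (using $c<a$ together with (\ref{cond})), so $\bar\lambda_0$ is the unique fixed point in $S^2$. To force convergence I would observe that $cz^{(0)}-a\leq c-a<0$, hence $z^{(1)}=z^{(0)}-y^{(0)}(cz^{(0)}-a)+bx^{(0)}\geq z^{(0)}$, iterating to show $z^{(n)}$ is increasing and bounded, thus convergent. Splitting by the sign of $d$: if $d<0$ then $x^{(n+1)}\leq(1-b)x^{(n)}\leq x^{(n)}$, while if $d\geq 0$ then $y^{(n+1)}\leq y^{(n)}(1-dx^{(n)})\leq y^{(n)}$; in either case a second coordinate is monotone bounded, so by $x+y+z=1$ the whole vector converges, and the limit must be $\bar\lambda_0$.

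For Case (2), $c>a$ and $d<b$, I would use Lemma \ref{l2}: when $c+d\geq 0$ the set $\mathcal E=\{z\geq (a+dx)/c\}$ is forward invariant, and when $c+d<0$ the complementary set $F$ is forward invariant. On $\mathcal E$ one has $cz\geq a+dx$, whence $y^{(n+1)}=y^{(n)}(1-a-dx^{(n)}+cz^{(n)})\geq y^{(n)}$, so $y^{(n)}$ is increasing and bounded; on the complement $y^{(n)}$ is decreasing while it stays there. Combining this with $x^{(n+1)}\leq(1-b+d)x^{(n)}$ which gives $x^{(n)}\to 0$, every trajectory from the interior enters $\mathcal E$ (or $F$) eventually and its $y$-coordinate converges monotonically. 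Since the only fixed point in $S^2$ compatible with $y>0$ is $\bar\lambda_1$ and Proposition \ref{pp1} gives that $\bar\lambda_1$ is attracting and $\bar\lambda_0$ saddle, the limit must be $\bar\lambda_1$, while $y^{(0)}=0$ places the trajectory on $M_2$ handled earlier.

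For Case (3), $c>a$ and $d>b>0$, I would invoke Proposition \ref{pp1}(3), which under these hypotheses gives that $\bar\lambda_2\in\mathrm{int}\,S^2$ is attracting, while $\bar\lambda_0,\bar\lambda_1$ are non-attracting (saddle or non-hyperbolic). This is the main obstacle: coordinate-wise monotonicity fails here (the trajectory spirals, as the phase portraits in Fig.~2, Fig.~3 illustrate), so a direct monotone-convergence argument is unavailable. The cleanest rigorous route would be to exhibit a Lyapunov function for $W$ on $\mathrm{int}\,S^2$ vanishing only at $\lambda_2$, or to exploit the local linearization together with a trapping-region argument showing that every interior orbit enters the basin of $\bar\lambda_2$; absent such a construction, one falls back, as the paper does, on numerical verification supported by the absence of any other interior fixed point and the exclusion of periodic orbits. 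Boundary initial points with $y^{(0)}=0$ or $x^{(0)}=0$ reduce to the one-dimensional dynamics on $M_2$ and $M_1$, delivering the remaining limits $\bar\lambda_0$ and $\bar\lambda_1$ respectively.
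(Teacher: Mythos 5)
Your proposal follows essentially the same route as the paper: monotonicity of $z^{(n)}$ combined with a case split on the sign of $d$ for $c<a$, Lemma \ref{l2} and the resulting monotonicity of $y^{(n)}$ for $c>a$, $d<b$, and reduction to the invariant faces $M_1$, $M_2$ for boundary initial points. You also correctly identify that Case (3) is the weak point: the paper itself only verifies it numerically and concedes, in the remark following the theorem, that the first line of (\ref{oo}) is established only in a small neighborhood of $\bar{\lambda}_{2}$, so your acknowledgment of that gap accurately reflects the state of the paper's own argument.
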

\smallskip

\begin{rk} We were able to prove the first line of the formula (\ref{oo}) only for a small 
neighborhood of $\bar{\lambda}_{2}$, because this fixed point is an attracting point (see Proposition \ref{pp1}).
\end{rk}

\section{Biological interpretations}

The dynamical systems considered in this paper are interesting because
they are higher dimensional and such dynamical
systems are important, but there are relatively few
dynamical phenomena that are currently understood \cite{De}, \cite{GMR}.

In \cite{Brit} for the  continuous-time case 
the steady (stable) states of the system of equations (\ref{1}) are found.
Usually in models of ecosystems species at
alternate levels in the food chain benefit from an increase in nutrient supply.
It is known that if the supply of the nutrient which limits
phytoplankton growth is increased, it is not the phytoplankton but the
predatory zooplankton that benefit. 
 
The results formulated in previous sections  have the following biological interpretations:

Let $\lambda^{(0)}=(x^{(0)},y^{(0)},z^{(0)})\in S^2$ be an initial state, i.e., $\lambda^{(0)}$
is the probability distribution on the set
$\{N, P, Z\}$ of ecosystem.

Assume the trajectory $\lambda^{(m)}$ of this point has a limit
$\lambda^*=(x^*, y^*, z^*)$ (equilibrium state)  this means that
the future of the system is stable: each $N, P, Z$ survives with probability $x^*, y^*, z^*$ respectively.
For example, the nitrogen, $N$, of the system will disappear if its probability $x^*$ is zero.

Each fixed point of the operator (\ref{b3}) is an equilibrium state and
Theorem \ref{t1} gives that system may have a continuum set (but in under conditions of Theorem \ref{ts} it has up to three)
of equilibrium states, the system
stays in a neighborhood of one of the equilibrium states (stable fixed point), which depends on
the initial state.
 
Moreover, as in continuous  time (\cite{Brit}) and in discrete time (Theorem \ref{t1} and Theorem \ref{ts})
 for $c <a$, the only steady state is the
trivial one $\bar\lambda_0=(0,0,1)$. Plankton levels are very low. As $c$ increases past $a$,
 $\bar\lambda_0$ loses its
stability as an eigenvalue passes through zero, and $\bar\lambda_1$ becomes stable. When $c$
increases still further, there is a second critical value at $ b<d(1-\frac{a}{c})$ (Proposition \ref{pp1})
where $\bar\lambda_1$ loses its stability as an eigenvalue passes through zero, and $\bar\lambda_2$ becomes stable. 

 \section*{Acknowledgements}

 This work was partially supported by  Agencia Estatal de Investigaci\'on (Spain),
grant MTM2016-79661-P (European FEDER support included, UE).

\end{document}